\numberwithin{equation}{section}
\theoremstyle{plain}
\newtheorem{theorem}{Theorem}[section]
\newtheorem{proposition}[theorem]{Proposition}         
\newtheorem{corollary}[theorem]{Corollary} 
\newtheorem{lemma}[theorem]{Lemma} 
\newtheorem{definition}[theorem]{Definition}  
\theoremstyle{definition}  
\newtheorem{example}[theorem]{Example} 
\newtheorem{remark}[theorem]{Remark} 
\newcommand{\C}{\mathbb C}   
\newcommand{\R}{\mathbb R}
\newcommand{\Z}{\mathbb Z}
\newcommand{\N}{\mathbb N}  
\newcommand{\Q}{\mathbb Q}  
\newcommand{\al}{\alpha}
\newcommand{\de}{\delta}
\newcommand{\la}{\lambda}
\newcommand{\si}{\sigma} 
\newcommand{\La}{\Lambda}
\newcommand{\eps}{\epsilon}
\renewcommand{\th}{\theta}
\newcommand{\om}{\omega}
\DeclareMathOperator{\tr}{tr}
\DeclareMathOperator{\diag}{diag}
\newcommand{\SL}{\textrm{SL}}
\renewcommand{\sl}{\frak s\frak l}
\newcommand{\SU}{\textrm{SU}}
\newcommand{\su}{\frak s\frak u}
\newcommand{\g}{{\frak g}}
\newcommand{\h}{{\frak h}}
\newcommand{\fA}{ \mathfrak{A} }
\newcommand{\no}{\noindent}
\newcommand{\st}{\ \vert\ }   
\renewcommand{\ll}{\lq\lq}
\newcommand{\rr}{\rq\rq\ }
\newcommand{\rrr}{\rq\rq}
\newcommand{\na}{\nabla}
\newcommand{\bp}{\begin{pmatrix}} 
\newcommand{\ep}{\end{pmatrix}} 
\newcommand{\bsp}{\left(\begin{smallmatrix}} 
\newcommand{\esp}{\end{smallmatrix}\right)}
\newcommand{\zzb}{ {z\bar z}  }
\newcommand{\ttb}{ {t\bar t}  }
\renewcommand{\i}{ {\scriptscriptstyle\sqrt{-1}}\, }
\newcommand{\ii}{ {\scriptstyle\sqrt{-1}}\, }
\newcommand{\Mzw}{ M^{(0)} }
\newcommand{\Mwd}{ M^{(0)}_{\diag}  }
\newcommand{\pp}{ p^\prime  }
\begin{document}     

\title[Representations and Stokes matrices]{Positive energy representations of affine algebras 
and 
Stokes matrices of the affine Toda equations
}  
   
\author{Martin A. Guest and Takashi Otofuji}    

\date{}   

\maketitle 

\begin{abstract}We give a construction which produces a
positive energy representation of the affine Lie algebra 
$\widehat{\sl}_{n+1}\C$ 
from the Stokes data of a solution of the tt*-Toda equations.  The
construction appears to play a role in conformal field theory.
We illustrate this with
several examples: the fusion ring, $W$-algebra minimal models 
(Argyres-Douglas theory),
as well as topological-antitopological fusion itself.
\end{abstract}

\section{Introduction}\label{intro}

In this article we give a purely mathematical construction which relates 

--- integrable p.d.e.\ (affine Toda equations), 

--- Stokes data of linear meromorphic o.d.e., and 

--- representations of
 infinite-dimensional Lie algebras.  
 
 \no By \ll purely mathematical\rr we mean that the construction {\em a priori} does not depend on concepts from physics. Nevertheless, our project was indeed motivated by physical ideas, originating from topological-antitopological fusion and quantum cohomology.  It seems to have a role --- as a
rather special example, at least --- in some mathematical aspects of conformal field theory. 

Our principal motivation is the tt*-Toda equations (tt* equations of Toda type). We
review these equations and their (global) solutions in section \ref{tt}. In
section \ref{sto} we give a Lie-theoretic description of the Stokes data of these solutions --- the main
technical ingredient here is from \cite{GH2}. Our construction of
positive energy representations of affine Lie algebras 
from Stokes data is given in section \ref{alc}.  

In section
\ref{cft} we give several applications of this construction in conformal field theory.  We expect
that this material could be expanded and developed further. 
For example, the ingredients of the construction all occur in the ODE/IM Correspondence, and we would expect a relation with that intriguing area.

\no{\em Acknowledgements:\  }  The first author was partially supported by JSPS grant 18H03668.

\section{The tt*-Toda equations}\label{tt}

We begin with a brief review of the tt*-Toda equations, in order to motivate our
main construction in section \ref{alc}.

The tt* equations were introduced by Cecotti-Vafa in their study of 
$N=2$ supersymmetric field theory (see \cite{CeVa91},\cite{CeVa93}).  They discussed
several examples, the most prominent being the tt* equations of \ll Toda type\rrr, 
or tt*-Toda equations.  These are
\begin{equation}\label{ost}
 2(w_i)_{\ttb}=-e^{2(w_{i+1}-w_{i})} + e^{2(w_{i}-w_{i-1})}, \ 
 w_i:\C^\ast\to\R, \ 
 i\in\Z
\end{equation}
where the real functions $w_0,\dots,w_n$ satisfy
$w_i=w_{i+n+1}$,
$w_i=w_i(\vert t\vert)$,
$w_i+w_{n-i}=0$.   

From physical considerations, Cecotti-Vafa predicted the existence of solutions of (\ref{ost}) with certain properties. As a
first step in this direction, 
the following statement was proved
in \cite{GuLi14},\cite{GIL1},\cite{GIL2},\cite{GIL3},\cite{MoXX},\cite{Mo14}.

\begin{theorem}\label{GIL} For each $N>0$, there is a one-to-one correspondence
between solutions of (\ref{ost}) on $\C^\ast$
and $\sl_{n+1}\C$-valued $1$-forms $\eta(z) \,dz$ on (the universal cover of) $\C^\ast$, where
\begin{equation}\label{GILhiggs}
\eta(z)=
\left(
\begin{array}{c|c|c|c}
\vphantom{\dfrac12}
 & & &  z^{k_0}
 \\
\hline
\vphantom{\dfrac12}
z^{k_1}  &  &   &  \\
\hline
\vphantom{\dfrac12}
  & \  \ddots\  &  & 
\\
\hline
\vphantom{\dfrac12}
 & &z^{k_n}  &  \!
\end{array}
\right).
\end{equation}
Here the $k_i$ are real numbers 
satisfying  $k_i\in[-1,\infty)$, $n+1+\sum_{i=0}^n k_i=N$, and
$k_i=k_{n-i+1}$ for $i=1,\dots,n$.
The variable $z$ of (\ref{GILhiggs}) is related to 
the variable $t$ of (\ref{ost}) by
$t=\tfrac{n+1}N z^{  \frac N{n+1} }$.
\end{theorem}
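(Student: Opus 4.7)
The plan is to realize (\ref{ost}) as the zero-curvature condition for a family of flat $\sl_{n+1}\C$-connections on $\C^\ast$ parameterized by a spectral parameter $\la$, and to identify $\eta(z)\,dz$ with the $\la^{-1}$-component (the Higgs field) after a suitable gauge transformation and change of variable. For each solution $w=(w_0,\dots,w_n)$ of (\ref{ost}) I would introduce the extended connection
$$
\na_\la \;=\; d \;+\; \bigl(w_t + \la^{-1} W(w)\bigr)\,dt \;+\; \bigl(-w_{\tbar} + \la\, W(w)^\ast\bigr)\,d\tbar,
$$
where $w_t$ (resp.\ $w_{\tbar}$) is the diagonal matrix with entries $(w_i)_t$ (resp.\ $(w_i)_{\tbar}$) and $W(w)$ is the cyclic shift matrix whose only nonzero entries are $e^{w_i-w_{i-1}}$ on the subdiagonal (with $e^{w_0-w_n}$ in the upper-right corner). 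A direct computation identifies the flatness of $\na_\la$ for all $\la\in\C^\ast$ with (\ref{ost}) together with the imposed reality and symmetry conditions.

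Next I would reduce the $\la^{-1}$-piece to the monomial form of (\ref{GILhiggs}). The radial dependence $w_i=w_i(\vert t\vert)$ forces the subdiagonal entries of $W(w)$ to be functions only of $\vert t\vert$; a diagonal gauge $g=\diag(e^{a_i(\vert t\vert)})$ and the substitution $t = \tfrac{n+1}{N}\, z^{N/(n+1)}$ then convert the $dt$-component of $\la\,\na_\la$ into $\eta(z)\,dz$ with $\eta$ of shape (\ref{GILhiggs}). The exponents $k_i$ arise as the leading powers of $e^{w_i - w_{i-1}}$ at the origin: the identity $n+1 + \sum k_i = N$ is the chain-rule bookkeeping between $dt$ and $dz$, the symmetry $k_i = k_{n-i+1}$ transcribes the anti-invariance $w_i + w_{n-i} = 0$ through the cyclic indexing, and the range $k_i \in [-1,\infty)$ is forced by smoothness of the solution across $t=0$. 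This produces the map from solutions to $1$-forms of the prescribed shape, and the associated $\eta$ lives on the universal cover of $\C^\ast$ precisely because the $k_i$ are in general non-integer.

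The inverse direction --- given admissible $(k_0,\dots,k_n)$, produce a unique global smooth real solution of (\ref{ost}) on $\C^\ast$ --- is the substantive content of \cite{GuLi14,GIL1,GIL2,GIL3,MoXX,Mo14}. They proceed by loop-group / isomonodromic / Painlev\'e-type methods, showing that the asymptotic data at $t=0$ encoded by $(k_i)$ determines a unique global solution whose behavior at $t=\infty$ is also controlled, and that no two admissible tuples $(k_i)$ give the same solution. The main obstacle is precisely this existence-and-uniqueness step: the forward construction above is essentially a gauge-theoretic computation, whereas producing bona fide global smooth solutions of (\ref{ost}) on all of $\C^\ast$ requires genuine nonlinear PDE analysis, and is what makes those papers the core technical input for this theorem.
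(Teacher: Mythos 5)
Your overall architecture matches the paper's: both realize (\ref{ost}) as the zero-curvature condition for a $\la$-family of connections, pass between the ``real'' gauge and the holomorphic gauge $\tfrac1\la\eta(z)\,dz$ via the loop-group (DPW/Iwasawa) correspondence, and defer the substantive global existence and uniqueness to \cite{GuLi14,GIL1,GIL2,GIL3,MoXX,Mo14} --- the paper itself offers only this sketch, running it in the direction $\eta\mapsto w$ (solve $L^{-1}L_z=\tfrac1\la\eta$, Iwasawa-factorize, set $w_i=\log b_i-m_i\log|t|$) where you run it as $w\mapsto\eta$.

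Two inaccuracies are worth correcting. First, the range $k_i\in[-1,\infty)$ is \emph{not} forced by ``smoothness of the solution across $t=0$'': the global solutions live on $\C^\ast$ and generically have logarithmic singularities $w_i\sim -m_i\log|t|$ at the origin; the condition $k_i\ge -1$ is equivalent, via (\ref{mandk}), to $m_{i-1}-m_i+1\ge 0$, i.e.\ to $m$ lying in the closed polytope that parametrizes global solutions. Second, your forward map is not ``essentially a gauge-theoretic computation'': to read off monomial exponents $k_i$ from a given global solution you must already know that $e^{w_i-w_{i-1}}$ has a definite power-law leading behaviour at $t=0$, and that asymptotic statement is itself part of the hard analytic content of the cited papers, not an automatic consequence of $w$ solving (\ref{ost}) on $\C^\ast$. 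So the difficulty is not confined to the inverse direction; both directions of the bijection rest on the asymptotic classification established in the references.
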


A more meaningful correspondence is obtained by introducing 
real numbers $m_0,\dots,m_n$ with $m_i+m_{n-i}=0$.   The $m_i$ are defined by:
\begin{equation}\label{mandk}
m_{i-1}-m_i+1=\tfrac{n+1}N(k_i+1).
\end{equation}
(We make the
convention that $m_i=m_{i+n+1}$.)
In terms of the $m_i$, we have a one-to-one  correspondence
between solutions of (\ref{ost}) and the convex polytope
\[
\{ m=\diag(m_0,\dots,m_n) \st m_{i-1}-m_i+1\ge 0, m_i+m_{n-i}=0 \}.
\]
Then the relation between $m_0,\dots,m_n$ and $w_0,\dots,w_n$ is simply given by the asymptotics of the solution at $t=0$, namely
\[
w_i\sim -m_i \log \vert t\vert \quad \text{(as $t\to 0$)}.
\] 
Writing $w=\diag(w_0,\dots,w_n)$, we have $w\sim -m \log \vert t\vert$.
With this notation (\ref{mandk}) is equivalent to
$z^{\frac N{n+1} m} \eta(z)z^{-\frac N{n+1} m}=z^{\frac 1{n+1} \sum_{i=0}^n k_i} \eta(1)$.
Thus, the $m_i$ arise simply through \ll balancing\rr the $k_i$.

It is well known that solutions of the Toda equations correspond to
certain kinds of harmonic maps.
The above relation between solutions $w$ and $1$-forms $\eta(z) \, dz$ is, in fact,  an example of the
generalized Weierstrass representation (or DPW representation) for harmonic maps of surfaces into symmetric spaces \cite{DoPeWu98}.  This is based on the loop group Iwasawa factorization \cite{PrSe86}. 

We review this construction very briefly, referring to section 2 of \cite{GIL3} for details.
Introducing a loop parameter $\la\in S^1$, one can solve the complex o.d.e.
\[
L^{-1} L_z = \tfrac1\la \eta, \quad L\vert_{z=0} = I
\]
near $z=0$ (at least, if all $k_i>-1$, which is the case needed in this article).  Then
the Toda equations (\ref{ost}) are the zero curvature
condition for the $1$-form\footnote{
For simplicity we are now modifying the notation of \cite{GIL3}.  In \cite{GIL3}, $\al$ denotes
a gauge equivalent $1$-form $\al=(L_\R G)^{-1} d(L_\R G)$, which 
has the same zero curvature condition. 
In \cite{GIL3}, the
nonzero entries of $\eta$ are $c_i z^{k_i}$ rather than $z^{k_i}$, and
the global solutions are
given by certain specific $c_i$, but we may
set all $c_i=1$ at the expense of modifying the Iwasawa factorization.
}
$\al=L_\R^{-1} dL_\R$, where
$L=L_\R L_+$ is a suitable Iwasawa factorization.  It can be shown that
$L_+=b+O(\la)$ where $b=\diag(b_0,\dots,b_n)$ and all $b_i>0$. 
Then one  defines $w_i= \log b_i - m_i\log \vert t\vert$.
So far this discussion is local (near $z=0$), and straightforward; the
nontrivial aspect of Theorem \ref{GIL} is that the local solutions are in fact globally defined for all
$0<\vert t\vert < \infty$. 

A summary of results related to Theorem \ref{GIL}, with some physical background, 
can be found in \cite{Gu21}.

\begin{remark} 
There are various equivalent forms of (\ref{ost}), which depend on the definition
of $w_i$ in terms of $b_i$, and whether $t$ or $z$ is used.  In terms of $z$, for
example, $w_i=\log b_i$ gives
\begin{equation*}
 2(w_i)_{\zzb}=- {\vert z^{k_{i+1}}\vert}^2 e^{2(w_{i+1}-w_{i})} +  {\vert z^{k_{i}}\vert}^2 e^{2(w_{i}-w_{i-1})}.
\end{equation*}
We use the $t$ version (\ref{ost}) for consistency with \cite{GIL3}.  
\qed
\end{remark}

\section{Stokes data}\label{sto}

The radial condition
$w=w(\vert t\vert)$ leads to another, quite different, interpretation of equation (\ref{ost}): it
is the condition that a certain meromorphic connection $\hat\al$  in the variable $\la\in\C^\ast$
has the property that its monodromy data is independent of $z$.   Details of this formulation 
can be found in section 2 of \cite{GIL3}. 

The isomonodromic connection $\hat\al$ has  (single-valued)
coefficients which are holomorphic for $\la\in\C^\ast$, 
but has poles of order $2$ at $\la=0$ and $\la=\infty$.
Its monodromy data consists of Stokes matrices relating solutions on sectors
at each pole and \ll connection matrices\rr  relating solutions at $\la=0$ and $\la=\infty$.
We refer to chapter 1 of \cite{FIKN06} for these concepts from o.d.e.\ theory. 

More generally, it was shown by Dubrovin \cite{Du93} that the tt* equations always have an  isomonodromic formulation
with the same pole structure (poles of order $2$ at $\la=0$ and $\la=\infty$). 
Monodromy data of
such connections can be hard to calculate, but,
for the tt*-Toda equations, calculations are facilitated by the close relation between
the Toda equations and Lie theory.  

This monodromy data was calculated in \cite{GIL2},\cite{GIL3},\cite{GH1},\cite{GH2}.
The result permits another characterization of the (global) solutions of (\ref{ost}), as an
alternative to the asymptotic data $m$, as in the next theorem.  Such a characterization had also been predicted by Cecotti-Vafa.  

In fact the  connection matrices turn out to be the same for all (global) solutions, so we may ignore them here.
The Stokes data may be specified efficiently as follows:

\begin{theorem}\label{GILstokes} For each $N>0$, there is a one-to-one correspondence
between solutions of (\ref{ost}) on $\C^\ast$
and
$n$-tuples of \ll Stokes parameters\rr
\[
s=(s_1,\dots,s_n)\in \R^n
\]
with 
$s_i=s_{n-i+1}$.  Explicitly, $s_i$ is
the $i$-th symmetric function of the $n+1$ numbers
$
e^{ (2m_0+n)\frac{\pi\i}{n+1}  },
e^{ (2m_1+n-2)\frac{\pi\i}{n+1}  },
\dots,
e^{ (2m_n-n)\frac{\pi\i}{n+1}  }.
$
\end{theorem}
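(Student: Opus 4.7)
The plan is to compute the Stokes data of the isomonodromic connection $\hat\al$ explicitly, using the Lie-theoretic structure of the affine Toda system, and then to identify its invariants with the symmetric functions prescribed in the statement. By Theorem \ref{GIL}, solutions are already parametrized by the polytope of asymptotic diagonal matrices $m=\diag(m_0,\dots,m_n)$; the task is therefore to set up a second parametrization via Stokes data and to prove that the two match as asserted.

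First, I would set up canonical solutions of $\hat\al$ on the standard Stokes sectors at the rank-two irregular singularity $\la=0$ (the situation at $\la=\infty$ being parallel by the inherent symmetry of the tt*-Toda connection). The leading part of $\hat\al$ at $\la=0$ is, up to conjugation, $\tfrac1{\la^2}$ times the regular cyclic element of $\sl_{n+1}\C$ corresponding to (\ref{GILhiggs}); its spectrum is $\{\om^i \st 0\le i\le n\}$ with $\om = e^{2\pi\i/(n+1)}$. This cyclic/semisimple structure, together with the Lie-theoretic apparatus of \cite{GH2}, reduces the Stokes multipliers to a single upper triangular unipotent matrix $Q\in\SL_{n+1}\C$ whose super-diagonal data depends on $n$ complex parameters.

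Next, I would impose the symmetries of the tt*-Toda problem. The reality condition $w_i:\C^\ast\to\R$ together with the \ll radial\rr assumption $w=w(\vert t\vert)$ forces the $n$ free parameters in $Q$ to be real, and the anti-diagonal symmetry $w_i+w_{n-i}=0$ forces the palindromic constraint $s_i=s_{n-i+1}$. This identifies the subspace of admissible Stokes tuples with $\{s\in\R^n \st s_i=s_{n-i+1}\}$, and \cite{GH2} provides the injectivity of the map \emph{solution}$\mapsto s$. For the final identification of $s_i$ with the advertised elementary symmetric function, I would use the standard relation between the formal monodromy of $\hat\al$ at $\la=0$ (whose eigenvalues are $e^{2\pi\i\mu_j}$ for the formal exponents $\mu_j$ at that pole) and the Stokes multipliers: after the normalization dictated by the cyclic leading part, the product of the Stokes matrices around a full turn is conjugate to $Q$, so the eigenvalues of $Q$ are the exponentials of the formal exponents. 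A computation using the asymptotic $w\sim -m\log\vert t\vert$ and the change of variables $t=\tfrac{n+1}N z^{N/(n+1)}$ shows that these formal exponents are exactly $(2m_i + n - 2i)/(2(n+1))$ for $i=0,\dots,n$. Hence the characteristic polynomial of $Q$ has roots $e^{(2m_i+n-2i)\pi\i/(n+1)}$, and its coefficients (which are, by the chosen normal form, $\pm s_i$) give the asserted symmetric-function identities.

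Finally, bijectivity follows by assembling the pieces: Theorem \ref{GIL} gives a bijection between solutions and points of the polytope of $m$'s; the map $m\mapsto s$ via elementary symmetric functions is injective because the unordered multiset $\{e^{(2m_i+n-2i)\pi\i/(n+1)}\}$ determines the $m_i$ uniquely (the residues mod $2\pi\i/(n+1)$ separate the indices $i$), and the symmetry $m_i+m_{n-i}=0$ ensures the image lies in the palindromic locus; surjectivity onto the admissible Stokes tuples is the content of the explicit computation above. The main obstacles are the two technical pieces that rest on \cite{GH2}: bringing the Stokes matrix to the stated $n$-parameter canonical form under the full set of tt*-Toda symmetries, and matching the formal-monodromy eigenvalues at $\la=0$ to the asymptotic exponents $m_i$ with the correct additive shift $n-2i$. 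Once these are in place, the symmetric-function identity and the bijection follow by elementary algebra.
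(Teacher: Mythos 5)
The first thing to note is that the paper does not actually prove this theorem: it is presented as a survey of results from \cite{GIL2},\cite{GIL3},\cite{GH1},\cite{GH2}, with the text explicitly stating that ``the definition of $\Mzw$, and the proof of the theorem, is given in section 6 of \cite{GH2}''. Within the paper, the only ingredient supplied is Theorem \ref{qqpi} (itself quoted from Proposition 6.9 of \cite{GH2}): $\Mzw$ is conjugate to $e^{\frac{2\pi\i}{n+1}(m+\rho)}$, whose diagonal entries are exactly $e^{(2m_j+n-2j)\frac{\pi\i}{n+1}}$, so the coefficients of its characteristic polynomial are (up to sign) the stated elementary symmetric functions. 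Your proposal attempts a genuine proof from scratch, which is more than the paper offers, but it is not a proof: at every hard point it defers to \cite{GH2} (``\cite{GH2} provides the injectivity'', ``the two technical pieces that rest on \cite{GH2}''), so at best it is a roadmap.

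More importantly, the central step of the roadmap is wrong as stated. You claim that ``the product of the Stokes matrices around a full turn is conjugate to $Q$, so the eigenvalues of $Q$ are the exponentials of the formal exponents'' at $\la=0$, and that these formal exponents are $(2m_i+n-2i)/(2(n+1))$. Stokes factors are unipotent, so any product of them alone has all eigenvalues equal to $1$; the matrix $\Mzw$ whose characteristic polynomial carries the $s_i$ is a product of a Stokes factor with a twist (a root of the formal monodromy composed with the cyclic symmetry of $\eta$), and its eigenvalues are \emph{not} determined by the formal (local) data at $\la=0$ for fixed $t$. The identity $\Mzw\sim e^{\frac{2\pi\i}{n+1}(m+\rho)}$ is precisely the nontrivial \emph{connection problem}: it links Stokes data living at $\la=0$ (computed at fixed $t\ne 0$) to the asymptotic exponents $m$ of the solution at $t=0$. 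In \cite{GIL1},\cite{GIL2},\cite{GH2} this is established by an isomonodromy argument --- the Stokes data is $t$-independent, so one degenerates the connection as $t\to 0$, where $w\sim -m\log\vert t\vert$ turns the irregular problem into an explicitly solvable one --- together with a substantial Lie-theoretic computation. Your single sentence ``a computation \dots\ shows that these formal exponents are exactly $(2m_i+n-2i)/(2(n+1))$'' conceals exactly this content, and frames it incorrectly as a formal/local statement. Relatedly, your surjectivity claim (``the content of the explicit computation above'') is not established: surjectivity onto the admissible Stokes locus requires both the existence half of Theorem \ref{GIL} (the global Riemann--Hilbert/PDE result of \cite{GIL2},\cite{MoXX},\cite{Mo14}) and a verification of the image of the map $m\mapsto s$ on the polytope, neither of which your computation provides.
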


The Stokes parameters are (up to sign) the coefficients of
the characteristic polynomial of a certain \ll $(n+1)$-th root of monodromy\rr matrix 
$\Mzw$, from which the Stokes matrices can be read off.  
The definition of
$\Mzw$, and the proof of the theorem,  is given in section 6 of \cite{GH2}.  
It
should be noted that the Stokes parameters are canonical, unlike the Stokes matrices themselves,
which depend on various choices. This is important for physical applications. 

As it will play a role in the next section, we note the following Lie-theoretic 
property of $\Mzw$, which implies the formula for $s_i$ just given.

\begin{theorem}\label{qqpi} Assume that all  $k_i>-1$. Then $\Mzw$ is conjugate to
the diagonal matrix 
\[
\Mwd=
e^{\frac{2\pi\i}{n+1}(m+\rho)}
\]
where
$\rho=\diag(\frac n2, \frac n2-1,\dots,-\frac n2)$.
\end{theorem}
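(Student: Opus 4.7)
The plan is to compute the formal monodromy of the isomonodromic connection $\hat\al$ at the irregular singularity $\la=0$, and to match the $(n+1)$-th root construction of $\Mzw$ from section $6$ of \cite{GH2} with $e^{\frac{2\pi\i}{n+1}(m+\rho)}$.

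First, I would recall the general structure of a formal solution of $\hat\al$ near $\la=0$. Since $\hat\al$ has a pole of order $2$ there, classical Birkhoff theory (chapter 1 of \cite{FIKN06}) supplies a formal fundamental solution of the shape $\hat\Psi(\la)\,\la^{\Lambda_0}\,e^{Q_0(1/\la)}$, where $Q_0$ is a diagonal polynomial coming from the $\la^{-2}$ coefficient and $\Lambda_0$ is the diagonal formal exponent extracted from the subleading coefficient after $Q_0$ has been diagonalized. The actual monodromy of $\hat\al$ around $\la=0$ is then $e^{2\pi\i\Lambda_0}$ (the Stokes factors, being unipotent, drop out after conjugation by the principal-part diagonalizer), and $\Mzw$ is constructed in \cite{GH2} precisely so that its $(n+1)$-th power is this monodromy; hence $\Mzw$ is conjugate to $e^{\frac{2\pi\i}{n+1}\Lambda_0}$ under the hypothesis $k_i>-1$, which rules out integer-shift ambiguities in the formal solution.

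Second, I would identify $\Lambda_0$ with $m+\rho$. The principal part of $\hat\al$ at $\la=0$ is governed by $\eta(1)$, which is the cyclic $(n+1)\times(n+1)$ permutation matrix (ones on the subdiagonal and in the top right corner). Its eigenvalues are the $(n+1)$-th roots of unity, and diagonalizing it inside $\sl_{n+1}\C$ produces the shift $\rho=\diag(n/2,n/2-1,\dots,-n/2)$ in the formal exponent --- this is the standard Weyl-vector contribution that appears when one conjugates the principal cyclic element into the Cartan subalgebra. The remaining contribution to $\Lambda_0$ is $m$ itself, and it comes from the balancing identity
\[
z^{\frac{N}{n+1}m}\,\eta(z)\,z^{-\frac{N}{n+1}m}=z^{\frac{1}{n+1}\sum_{i=0}^n k_i}\,\eta(1)
\]
of section \ref{tt}: after the gauge $z^{\frac{N}{n+1}m}$ absorbs the $z$-dependence of $\eta$, the induced change of frame on the $\la$-side feeds the diagonal matrix $m$ into the $\la^{-1}$-coefficient of $\hat\al$, and hence into $\Lambda_0$. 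Combining the two contributions gives $\Lambda_0=m+\rho$, and therefore $\Mzw$ is conjugate to $e^{\frac{2\pi\i}{n+1}(m+\rho)}=\Mwd$ as claimed.

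The main obstacle will be the bookkeeping: two successive gauges --- the balancing $z^{\frac{N}{n+1}m}$ and the diagonalization of the principal cyclic element $\eta(1)$ --- must be composed while tracking carefully how each contributes to $\Lambda_0$ as opposed to $Q_0$, and one must verify that no integer vector is added in the process, which is exactly what the hypothesis $k_i>-1$ guarantees. The identification of the Weyl vector $\rho$ here is ultimately standard $\sl_{n+1}\C$ representation theory (diagonalization of the principal cyclic element), but the conventions of \cite{GH2} --- basis ordering, normalization of the root of unity, the precise choice of $(n+1)$-th root in the definition of $\Mzw$ --- must be respected throughout so that the similarity class lands exactly at $m+\rho$ rather than at a Weyl-conjugate of it.
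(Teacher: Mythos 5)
The paper's own proof is essentially a citation: once the hypothesis $k_i>-1$ guarantees that $\Mzw$ is diagonalizable, the identification of its conjugacy class is exactly Proposition 6.9 of \cite{GH2}, which rests on Kostant's theory of the principal three-dimensional subalgebra (whence the remark that $\rho=x_0$ in the notation of \cite{Ko59}). Your attempt to re-derive that proposition breaks down at its central step. You assert that the actual monodromy of $\hat\al$ at $\la=0$ equals the formal monodromy $e^{2\pi\i\Lambda_0}$ because the Stokes factors, being unipotent, \ll drop out\rrr. This is false: at an irregular singular point the actual monodromy is the \emph{product} of the formal monodromy with all the Stokes factors on the surrounding sectors, and unipotent factors do not disappear from such a product (a unipotent matrix times a diagonalizable one is generally in a different conjugacy class from the diagonalizable one). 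More to the point, $\Mzw$ is \emph{not} the formal monodromy: in section 6 of \cite{GH2} it is built from a Stokes factor together with the cyclic twist, which is precisely why its characteristic polynomial carries the Stokes parameters $s_i$ of Theorem \ref{GILstokes}. If the Stokes factors dropped out, the theorem would have no Stokes-theoretic content at all. The genuine content of Proposition 6.9 --- which your sketch replaces with the false cancellation --- is that this particular product of a unipotent Stokes factor with a twist is conjugate to $e^{\frac{2\pi\i}{n+1}(m+\rho)}$; this is proved by Lie-theoretic arguments in the style of Kostant and Steinberg (regularity of the element, so that its conjugacy class is pinned down by its characteristic polynomial), not by discarding the unipotent part.

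There is a second, independent gap. Even if you knew that $(\Mzw)^{n+1}$ is conjugate to the full monodromy $e^{2\pi\i(m+\rho)}$ --- which can in fact be obtained honestly from the regular singular point of $\hat\om$ at $\la=\infty$, rather than from the irregular point at $\la=0$ --- you could not conclude that $\Mzw$ itself is conjugate to $e^{\frac{2\pi\i}{n+1}(m+\rho)}$: $(n+1)$-th roots of a matrix are far from unique up to conjugacy, and the hypothesis $k_i>-1$ only supplies diagonalizability of $\Mzw$, not a selection of the correct root. Identifying which root actually occurs is again the substance of section 6 of \cite{GH2}, so this step cannot be waved through either.
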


\begin{proof} When all  $k_i>-1$, $\Mzw$ is diagonalizable, so the
result follows from Proposition 6.9 of \cite{GH2}.  In the
notation of \cite{GH2} and \cite{Ko59}, $\rho=x_0$.
\end{proof}

\begin{remark}\label{integerk}
In the next section we shall restrict further to $k_i\in\Z_{\ge 0}$. Then
$\eta(z) \, dz$ is a holomorphic connection form on $\C$, with a
pole at $z=\infty$.  The Stokes data turns out to be {\em equivalent} to $s=(s_1,\dots,s_n)$. However,
the pole does not have semisimple residue, and its order depends on the values of
$k_0,\dots,k_n$ so the Stokes data
is harder to extract.  These difficulties may be dealt with by classical o.d.e.\ methods, but 
it is more efficient to use homogeneity and replace the $1$-form 
$\om=\frac1\la \eta(z)\, dz$ by a meromorphic
$1$-form $\hat\om$ in the variable $\la$, just as we replaced $\al$ by $\hat\al$.  
The $1$-form $\hat\om$ always has a semisimple pole of order $2$ at $\la=0$ and a pole of order $1$ at $\la=\infty$.
The Iwasawa factorization shows that the Stokes data of $\hat\al$ and $\hat\om$ are the same at $\la=0$.
The Stokes data of $\hat\om$ is easily calculated.
We note that $\al$ defines a harmonic bundle, and $\om$ a corresponding Higgs bundle; this is
the point of view of \cite{MoXX}, \cite{Mo14}.  In our notation the harmonic
metric is $e^{-2w}$ (cf.\ section 4.2 of \cite{Gu21}).
\qed
\end{remark}

\begin{remark}\label{antisym}
As stated in Remark \ref{integerk}, the Stokes data of the 
tt*-Toda connection  is the same as the Stokes data of the 
holomorphic connection, and so the formula in Theorem \ref{qqpi}
applies also to the 
holomorphic connection.  
Although the condition $k_i=k_{n-i+1}$ was
imposed in \cite{GH2}, the proof of the formula in Theorem \ref{qqpi}
makes no use of this condition; it remains valid for arbitrary $k_i\ge -1$.  
We shall need this fact in sections \ref{alc} and \ref{cft}.
\qed
\end{remark}

\section{Lie algebra representations from Stokes data}\label{alc}

We shall now establish a relation between

--- solutions $w$ of (\ref{ost}) with all $k_i\in\Z_{\ge 0}$

--- positive energy representations of the affine Lie algebra $\widehat{\sl}_{n+1}\C$

\no by utilizing the matrices $\Mzw$ (i.e.\ the Stokes data of $w$). 

For the Lie algebra $\g=\sl_{n+1}\C$ we choose 
the standard diagonal Cartan subalgebra $\h=\{ \diag(h_0,\dots,h_n) \st h_i\in\C, \sum_{i=0}^n h_i = 0\}$, and roots 
$x_i-x_j$ $(0\le i\ne j\le n)$, where $x_i:\diag(h_0,\dots,h_n)\mapsto h_i$.
We take $\al_i=x_{i-1}-x_i$ $(1\le i\le n)$ as simple roots, then $\psi=x_0-x_n$ is the highest root. 
Here we are using the Lie-theoretic conventions\footnote{
In this section we drop the requirement
$k_i=k_{n-i+1}$, until it is needed (in Corollary \ref{tt*andreps} below) for
the relation with solutions $w$ of (\ref{ost}).   
}
of Examples 2.1, 3.6, 3.11 of \cite{GH2}, which
generally follow those of Kostant \cite{Ko59}.

We use the bilinear form $B(X,Y)=\tr XY$ to identify $\h^\ast$ with $\h$. Then
the basic weights are identified with $\eps_1,\dots,\eps_n\in\h$, where $\al_i(\eps_j)=\de_{ij}$. 
Explicitly:
\[
\eps_i= 
\diag
\left(
\left(1-\tfrac{i}{n+1}\right)
(
\underbrace{1,\dots,1}_i,
\underbrace{0,\dots,0}_{n+1-i})-
\tfrac{i}{n+1}(
\underbrace{0,\dots,0}_i,
\underbrace{1,\dots,1}_{n+1-i})
\right).
\]
We have $\rho=\eps_1+\cdots+\eps_n$. 
The weight lattice is
\[
\textstyle
P=\{ \sum_{i=1}^n v_i \eps_i \st \text{all $v_i\in\Z$} \}
\]
(see Remark \ref{realver} below).  The (fundamental)
Weyl chamber is
$C=\{ \sum_{i=1}^n v_i \eps_i  \st \text{all $v_i\ge 0$} \}$,
and the dominant weights are $P_+=P\cap C$. 
The (fundamental) Weyl alcove is
\[
\textstyle
A= \{ \sum_{i=1}^n v_i  \eps_i \st \text{all $v_i\ge 0$ and $\sum_{i=1}^n v_i\le 1$} \}.
\]

\begin{remark}\label{realver} 
In section 6 of \cite{GH2} we put $\h_\sharp=\{ h\in\h \st \text{all $\al_i(h)\in \R$} \}$,
so that $\ii \h_\sharp$ is the standard Cartan subalgebra of the compact
real form $\su_{n+1}$.  Then the integer lattice is
$I=\ii \h_\sharp \cap 2\pi\ii \Z^{n+1}$.  Thus the real roots
$\alpha^{\text{real}}=(2\pi\ii)^{-1}\alpha$ take integer values on the integer lattice, as do all
real weights.  The basic real weights are
\[
\La_i^{\text{real}}=
\tfrac{1}{2\pi\ii}
\left(1-\tfrac{i}{n+1}\right)
(x_0+\cdots+x_{i-1})-
\tfrac{1}{2\pi\ii}
\tfrac{i}{n+1}(x_{i}+\cdots+x_{n})
\]
$(1\le i\le n)$,
and the weight lattice is $W=\oplus_{i=1}^n \Z \La_i^{\text{real}}$.  With
these definitions the Weyl alcove is
\[
\fA= \{ y\in \ii \h_\sharp \st 
\text{all $\alpha^{\text{real}}_i\ge 0$ and $\psi^{\text{real}}(y)\le 1$} \}.
\]
To simplify the presentation in this article we use the convention \ll without $2\pi\ii$\rr (which
amounts to declaring that the exponential map is $X\mapsto e^{2\pi\ii\,X}$).  We
 obtain $P$ (instead of $W$)
and $A$ (instead of $\fA$). Our integer lattice is the set of integer diagonal
matrices in $\sl_{n+1}\C$.
\qed
\end{remark}

We recall (see \cite{Ka83}, \cite{PrSe86}) that the affine Kac-Moody algebra $\widehat{\sl}_{n+1}\C$ is an 
extension of the loop algebra $\La\sl_{n+1}\C$ by two additive generators, and that
the irreducible positive energy representations of $\widehat{\sl}_{n+1}\C$ of level $k$
are parametrized by dominant weights $(\La,k)$, where $\La$ is a dominant weight
of $\sl_{n+1}\C$ of level $k$. 

For nontrivial representations we have $k\in\N$.  The 
dominant weights
of $\sl_{n+1}\C$ of level $k$ are
\[
\textstyle
P_k=\{ \sum_{i=1}^n v_i \eps_i \in P_+ \st 
\sum_{i=1}^n v_i \le k
\}.
\]
The following fact is well known, but we give the short proof.

\begin{lemma}\label{pk}  We have
$
P_k + \rho = P_+ \cap (k+n+1) \mathring A
$
where $\mathring A$ denotes the interior of the Weyl alcove $A$.
\end{lemma}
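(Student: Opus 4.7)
The plan is to verify the asserted set equality by unpacking both sides directly from the definitions; no deep input is needed beyond the explicit descriptions of $\rho$, $P_+$, $P_k$, and $A$ given above.

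First I would rewrite the left-hand side. Since $\rho=\eps_1+\cdots+\eps_n$, a point of $P_k+\rho$ has the form $\sum_{i=1}^n (v_i+1)\eps_i$ with $v_i\in\Z_{\ge 0}$ and $\sum v_i\le k$. Setting $w_i=v_i+1$, this is precisely the set
\[
P_k+\rho=\Bigl\{\textstyle\sum_{i=1}^n w_i\eps_i \st w_i\in\Z,\ w_i\ge 1,\ \sum_{i=1}^n w_i\le k+n\Bigr\}.
\]

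Next I would rewrite the right-hand side. A point of $(k+n+1)\mathring A$ has the form $\sum u_i\eps_i$ with $u_i>0$ and $\sum u_i<k+n+1$. Intersecting with $P_+$ forces each $u_i$ to be a non-negative integer, and combined with $u_i>0$ this gives $u_i\in\Z_{\ge 1}$; the condition $\sum u_i<k+n+1$ then becomes $\sum u_i\le k+n$, since $\sum u_i$ is an integer. Thus
\[
P_+\cap (k+n+1)\mathring A=\Bigl\{\textstyle\sum_{i=1}^n u_i\eps_i \st u_i\in\Z,\ u_i\ge 1,\ \sum_{i=1}^n u_i\le k+n\Bigr\},
\]
which is visibly the same set.

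There is essentially no obstacle: the only substantive point is the equivalence between the strict inequality $\sum u_i<k+n+1$ and the non-strict $\sum u_i\le k+n$, which holds because the $u_i$ are integers (the reason the interior $\mathring A$ rather than $A$ appears on the right). The $\eps_i$ form a basis of $\h^\ast$, so matching coefficients is legitimate, and the shift by $\rho$ is exactly what converts the \emph{closed} simplex defining $P_k$ into the \emph{open} simplex defining $(k+n+1)\mathring A$.
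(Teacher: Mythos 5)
Your proof is correct and follows essentially the same route as the paper's own argument: both sides are unpacked in coordinates with respect to the basis $\eps_1,\dots,\eps_n$, and the only substantive step is the equivalence of the strict inequality $\sum u_i<k+n+1$ with $\sum u_i\le k+n$ for integer coefficients. Nothing is missing.
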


\begin{proof}  Let $v=\sum_{i=1}^n v_i \eps_i$ with all $v_i\in\Z_{\ge 0}$.  Then:
(i) $v\in P_k+\rho$ iff $v_i\ge 1$ and $\sum_{i=1}^n (v_i-1) \le k$,
i.e.\ $v_i>0$ and $\sum_{i=1}^n v_i < k+n+1$;  
(ii) $v\in \cap (k+n+1) A$ iff 
 $v_i\ge 0$ and $\sum_{i=1}^n v_i \le k+n+1$.  
 Hence $v\in \cap (k+n+1) \mathring A$ iff 
 $v_i>  0$ and $\sum_{i=1}^n v_i < k+n+1$.  
Thus (i) and (ii) are equivalent.
\end{proof}

In view of this, we introduce the following notation:

\begin{definition}\label{theta}
Let 
$\mathring A_k=  \left( \tfrac{1}{k+n+1} P_+ \right) \cap\mathring A$.
Let $\th:\mathring A_k \to P_k + \rho$ be the identification
given by $\th(v)=(k+n+1)v\in P_+ \cap (k+n+1) \mathring A =
P_k + \rho$.
\end{definition}

Recall from section \ref{sto} that the Stokes data
is represented by a certain matrix $\Mwd=e^{\frac{2\pi\i}{n+1}(m+\rho)}$.  With the conventions of
\cite{GH2}, the corresponding Lie algebra element $\frac{2\pi\i}{n+1}(m+\rho)$
is in $\fA$ (see Remark \ref{realver}); with our current conventions we have
$\frac{1}{n+1}(m+\rho)$ in $A$. 

We now ask:

(i) for which $m$ does $\frac1{n+1}(m+\rho)$ lie
in the subset $\mathring A_k$, for some $k$ ?

(ii) in that case, what is the corresponding element of $P_k + \rho$?

\no The answers are:

\begin{theorem}\label{stokesandreps}   
Assume that $m$ arises from $k_0,\dots,k_n\in\Z_{\ge 0}$ through formula
(\ref{mandk}) (with $m_0+\cdots+m_n=0$). Then:

\no(a) $\frac{1}{n+1}(m+\rho) \in\mathring A_k$ for $k=\sum_{i=0}^n k_i$, and
 
\no (b) the corresponding element of $P_k + \rho$ is $(\sum_{i=1}^n k_i\eps_i) + \rho$.
\end{theorem}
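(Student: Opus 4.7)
The plan is to reduce both claims of the theorem to a single diagonal-matrix identity,
\[
\tfrac{N}{n+1}(m+\rho)=\sum_{i=1}^n (k_i+1)\,\eps_i,
\]
after which (a) and (b) follow by inspection, using $k+n+1=N$ and the fact that $\rho=\sum_{i=1}^n \eps_i$.

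The key to setting this up is the observation that $\eps_i$, viewed as a diagonal matrix, is characterized among traceless diagonal matrices by the property that its $j$-th consecutive diagonal difference $(\eps_i)_{j-1}-(\eps_i)_j$ equals $\de_{ij}$. Consequently any traceless $D\in\h$ can be recovered as $D=\sum_{j=1}^n d_j\,\eps_j$, where $d_j:=D_{j-1}-D_j$. Applied to $D=\tfrac{N}{n+1}(m+\rho)$, which is traceless since $\sum m_i=0$ and $\tr\rho=0$, relation (\ref{mandk}) gives
\[
d_j=\tfrac{N}{n+1}\bigl((m_{j-1}-m_j)+(\rho_{j-1}-\rho_j)\bigr)=\tfrac{N}{n+1}(m_{j-1}-m_j+1)=k_j+1,
\]
which yields the displayed identity. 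This uses only (\ref{mandk}) and $\sum m_i=0$, not the symmetry $m_i+m_{n-i}=0$, consistent with Remark \ref{antisym}.

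Part (a) then follows at once: dividing by $N$ gives $\tfrac{1}{n+1}(m+\rho)=\sum_{j=1}^n \tfrac{k_j+1}{N}\eps_j$, whose coefficients are strictly positive (since $k_j\ge 0$) and whose sum equals $\tfrac{n+\sum_{j=1}^n k_j}{N}=1-\tfrac{1+k_0}{N}<1$, so the element lies in $\mathring A$. Since $(k+n+1)$ times it equals $\sum_{j=1}^n(k_j+1)\eps_j\in P_+$, it lies in $\mathring A_k$ by Definition \ref{theta}. For (b), $\th$ multiplies by $k+n+1=N$, so the corresponding element of $P_k+\rho$ is $\sum_{j=1}^n(k_j+1)\eps_j=\bigl(\sum_{j=1}^n k_j\eps_j\bigr)+\rho$, as claimed. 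Honestly, there is no real obstacle: the content of the argument is the recognition that the normalizing factor $\tfrac{n+1}{N}$ in (\ref{mandk}) is precisely what turns the integers $k_j+1$ into consecutive diagonal differences of $\tfrac{N}{n+1}(m+\rho)$, after which both claims collapse to bookkeeping about the simple-root coordinates of $\rho$ and the basic weights $\eps_j$.
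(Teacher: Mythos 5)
Your proof is correct and follows essentially the same route as the paper: the central step in both is the identity $\tfrac{N}{n+1}(m+\rho)=\rho+\sum_{i=1}^n k_i\eps_i$, verified by reading off simple-root coordinates (your ``consecutive diagonal differences'' are exactly the values $\al_j$ applied to both sides), after which (a) and (b) follow via $\th$. The only cosmetic difference is that you check membership in $\mathring A_k$ by hand where the paper invokes Lemma \ref{pk}.
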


\begin{proof}  First we observe that formula (\ref{mandk}) is equivalent to 
\begin{equation}\label{Mandk}
\textstyle
N \frac1{n+1}(m+\rho) = \rho + \sum_{i=1}^n k_i\eps_i.
\end{equation}
(To verify this, it suffices to apply each simple root  $\al_i$ to both sides,
then use $\al_i(m)=m_{i-1}-m_i$ and $\al_i(\rho)=1$.)
Next we put $k=\sum_{i=0}^nk_i$ (hence $N=n+1+k$).
Then (\ref{Mandk}) says that
$\th(\frac1{n+1}(m+\rho)) = \rho + \sum_{i=1}^n k_i\eps_i$,
where $\th$ is as in Definition \ref{theta}.
This gives both (a) and (b).
\end{proof}

Restricting now to the tt*-Toda situation (i.e.\ assuming $k_i=k_{n-i+1}$), we obtain:

\begin{corollary}\label{tt*andreps}  Assume that  $k_i\in\Z_{\ge 0}$
and $k_i=k_{n-i+1}$. Let $N=n+1+\sum_{i=0}^n k_i$.
Then there is a one-to-one correspondence between

\no
(i)  solutions $w$ of the tt*-Toda equation given by $\eta$ (as in Theorem \ref{GIL})

\no
(ii) Stokes data $\Mzw$ given by $m$  (as in Theorem \ref{GILstokes})

\no
(iii) positive energy representations of $\widehat{\sl}_{n+1}\C$ with dominant weights
$(\La,k)=(\sum_{i=1}^n k_i\eps_i,N-(n+1))$.
\end{corollary}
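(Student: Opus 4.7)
The plan is to chain together bijections that are essentially already in place. The equivalence (i)$\leftrightarrow$(ii) is immediate from Theorems \ref{GIL} and \ref{GILstokes}: both parametrize the global solutions of (\ref{ost}) on $\C^\ast$ under the present hypotheses, each anchored to the same underlying diagonal matrix $m=\diag(m_0,\dots,m_n)$ via formula (\ref{mandk}).

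For (ii)$\leftrightarrow$(iii) I would apply Theorem \ref{stokesandreps} directly, with $k=\sum_{i=0}^n k_i=N-(n+1)$. Part (a) places $\tfrac{1}{n+1}(m+\rho)\in\mathring A_k$, and part (b) reads off
\[
\th\!\left(\tfrac{1}{n+1}(m+\rho)\right)=\rho+\sum_{i=1}^n k_i\eps_i\in P_k+\rho,
\]
with $\th$ the bijection of Definition \ref{theta}. Subtracting $\rho$ extracts the dominant weight $\La=\sum_{i=1}^n k_i\eps_i$ of $\sl_{n+1}\C$ at level $k$. One routine check is that $\La\in P_k$: this reduces to $\sum_{i=1}^n k_i\le k$, which follows from $k_0\ge 0$.

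I then invoke the standard classification recalled in section \ref{alc} (see \cite{Ka83},\cite{PrSe86}): irreducible positive energy representations of $\widehat{\sl}_{n+1}\C$ at level $k\in\N$ are parametrized precisely by elements of $P_k$. Hence the pair $(\La,k)=(\sum_{i=1}^n k_i\eps_i,\,N-(n+1))$ determines a unique such representation. The reverse direction unwinds by setting $k_0=k-\sum_{i=1}^n k_i$ and inverting (\ref{mandk}) to recover $m$. The step I would double-check most carefully is the symmetry bookkeeping --- that the tt*-Toda involution $k_i\leftrightarrow k_{n-i+1}$ (equivalently $m_i+m_{n-i}=0$, equivalently the diagram involution of $\widehat{\sl}_{n+1}\C$) is respected by the map $\th$ of Definition \ref{theta} --- but given the clean linear form of (\ref{Mandk}) this is routine rather than a genuine obstacle.
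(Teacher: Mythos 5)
Your proof is correct and takes essentially the same route as the paper, which presents the corollary as an immediate consequence of Theorems \ref{GIL}, \ref{GILstokes} and \ref{stokesandreps} without a separate argument. The checks you flag --- that $\La\in P_k$ (via $k_0\ge 0$) and that the symmetry $k_i=k_{n-i+1}$ is consistently carried through $\th$ --- are indeed routine and resolved exactly as you describe.
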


\begin{example}\label{sing} Let $k_0=1$ and $k_1=\cdots=k_n=0$. Then $N=n+2$ and $m=-\tfrac 1{n+2} \rho$. The corresponding representation has
dominant weight $(0,1)$; this is the basic representation of $\widehat{\sl}_{n+1}\C$.
Let us compute the Stokes parameters $s_1,\dots,s_n$, using Theorem \ref{GILstokes}.
These are
the elementary symmetric functions of 
$
e^{\frac{\pi\i}{n+2} n},
e^{\frac{\pi\i}{n+2} (n-2)},
\dots,
e^{\frac{\pi\i}{n+2} (-n)}.
$
When $n+1$ is even, they are the $(n+2)$-th roots of $-1$, excluding $-1$ itself. They are the roots of the polynomial
\[
\tfrac{x^{n+2} + 1}{x+1} = x^{n+1} - x^n + x^{n-1} - \cdots - x + 1.
\]
When $n+1$ is odd, they are the $(n+2)$-th roots of $1$, excluding $-1$, i.e.\  
the roots of the polynomial
\[
\tfrac{x^{n+2} - 1}{x+1} = x^{n+1} - x^n + x^{n-1} - \cdots + x - 1.
\]
In both cases, all $s_i=1$. 
\qed
\end{example}

To comment on the significance of Theorem \ref{stokesandreps}, one might say that it is hardly
surprising that a positive energy representation of $\widehat{\sl}_{n+1}\C$ can
be concocted artificially from the element $\frac1{n+1}(m+\rho)$ of the Weyl alcove,
as the Weyl alcove plays such a fundamental role in the theory of affine Lie algebras.
However, Corollary \ref{tt*andreps}  says that the (dominant weight of the) representation
is given {\em precisely} by the integers $k_i$ from which the solution $w$ was constructed.
Moreover the  positive energy representations give {\em all} the global solutions of (\ref{ost})
which are  generic (i.e.\ $m_{i-1}-m_i+1>0$)
and rational (i.e.\ $m_i\in\Q$).  These form an open dense subset of all
global solutions. Thus the
representations are tightly related to the solutions of the tt*-Toda equations
through our construction.

\begin{remark}  Positive energy representations of $\widehat{\sl}_{n+1}\C$
give (projective) representations of the loop group $\La\SL_{n+1}\C$.  We have seen (and it is well known) that $\La\SL_{n+1}\C$ plays an important role in solving the Toda equations. 
Thus one can expect a more
direct role for the representation associated to $w$ in Corollary \ref{tt*andreps}. 
Indeed, the
solutions are obtained by taking the Iwasawa factorization of
the holomorphic $\La\SL_{n+1}\C$-valued function $L$, and
this is equivalent to the Birkhoff factorization of
$c(L)^{-1}L$, where $c$ is the real form involution of $\La\SL_{n+1}\C$.
This should give
a determinant formula for the $\tau$-function of $w$, as
the Birkhoff factorization can be expressed in terms of infinite determinants given by $\tau$-functions. 
Although we have not pursued this, the existence
of such formulae is well known --- see section 6 of \cite{GIL2} for a brief explanation
of a  determinant formula due to Tracy and Widom \cite{TrWi98}. 
\qed
\end{remark}

\section{Relations with conformal field theory}\label{cft}

In this section we describe three ways in which the construction of
section \ref{alc} is relevant to conformal field theory. It is written mainly
for mathematicians who might not be familiar with physics,
but we hope that the ideas sketched here are not too inaccurate and might
also be of passing interest to physicists.

\no{\em 1. Topological-antitopological fusion}

According to Cecotti and Vafa (\cite{CeVa91},\cite{CeVa93}) it is the
solutions with \ll integer Stokes data\rr which represent physically realistic models.  
With our notation, this means solutions with integer 
Stokes parameters $s_i$.  Then the $s_i$ can be interpreted as counting Bogomolnyi solitons.

Furthermore, the $s_i$ appear as leading term coefficients in the asymptotics 
as $t\to\infty$ of the
corresponding solution $w$ (see \cite{Gu21} for a precise statement).
It follows from this (and Theorem \ref{GILstokes} above) that the global
solutions are characterized equally well by their asymptotics at $t=\infty$
as by their asymptotics at $t=0$.  This is another property predicted by 
Cecotti and Vafa, on the grounds that $w$ represents the
renormalization group flow between the chiral data at $t=0$ (in our notation,
the $k_i$ or $m_i$) and the soliton data at $t=\infty$ (the $s_i$). 

We have seen a solution of this type already in Example \ref{sing}, where
we have $k_0=1, k_1=\cdots=k_n=0$ and $m=-\tfrac 1{n+2} \rho$. 
All $s_i$ are equal to $1$ here.  
From the tt* point of view, this particular solution 
corresponds to the supersymmetric $A_{n+1}$ minimal model. Geometrically,
it corresponds to an unfolding $\frac1{n+2} x^{n+2} - tx$ of the $A_{n+1}$ singularity
$\frac1{n+2} x^{n+2}$.   

Other solutions of the tt* equations
with geometric interpretations are those corresponding to
the quantum cohomology of K\"ahler manifolds (or orbifolds).  
It is
implicit in \cite{CeVa91} that all such solutions are
expected to be  globally defined.
In the
(very special) case of the tt*-Toda equations, the basic example
is the  quantum cohomology of $\C P^n$, complex projective space. Here we have
$k_0=0, k_1=\cdots=k_n=-1$ and $m=-\rho$, and the
solution is indeed globally defined.  However, the assumption $k_i>-1$ is not satisfied
here, and Theorem \ref{qqpi} does not apply (in fact, $\Mzw$
is not diagonalizable).  Nevertheless the formula for
the Stokes numbers
in Theorem \ref{GILstokes} does apply, and it gives
$s_i=\binom{n+1}{i}$. 

Further examples (such as weighted projective spaces and their
hypersurfaces) can be found in \cite{GuLi12}.  In all cases 
the assumption $k_i>-1$ is violated.
 
With the prominent exception of the 
$A_{n+1}$ minimal model, however, the \ll integer Stokes data\rr solutions are 
generally not
of the type considered in section \ref{alc}.

\no{\em 2. The fusion ring}

The fusion ring of the WZW model $SU(n\!+\!1)_k$
is a certain  ring structure on the set of positive energy representations of 
$\widehat{\sl}_{n+1}\C$ of level $k$.  We refer to \cite{Ge91} for the
background, and \cite{Me18} for a treatment close to the
context of the current article.

The ring can be described succinctly (using the notation
of section \ref{alc}) as follows.  For a
positive energy representation with dominant weight $(\La,k)$, 
where $k\in\N$ and $\La\in P_k$,  a \ll special
element\rr is defined by
\[
t_\La = e^{2\pi\i \zeta_\La},
\quad
 \zeta_\La=
\tfrac{\La+\rho}{k+n+1}.
\]
Then the level $k$ fusion ideal $I_k(\SU_{n+1})$
of the
representation ring $R(\SU_{n+1})$  is defined by
\[
I_k(\SU_{n+1}) = \{ 
\text{representations whose characters vanish at all $t_\La, \La\in P_k$}
\}.
\]
The level $k$ fusion ring is then $R(\SU_{n+1})/I_k(\SU_{n+1})$.

Our observation concerning this is that 
\[
\zeta_\La = \tfrac1{n+1}(m+\rho),
\]
where $m$ corresponds to $\La=\sum_{i=1}^n k_i\eps_i$
as in section \ref{alc}.  This follows immediately from (\ref{Mandk}).
In other words, the special element 
$t_\La$ is precisely our matrix 
$\Mwd$
which represents the Stokes data of the holomorphic $1$-form $\eta(z)dz$
(see Remark \ref{antisym}).

We do not know a satisfactory explanation of this coincidence. On the one hand,
it is well known that fusion arises geometrically from \ll fusing\rr moduli spaces 
of flat $\SU_{n+1}$-connections over Riemann surfaces with a
common boundary component, and it is known that such moduli spaces
can be described in terms of monodromy data. 
On the other hand
the connections in sections \ref{intro}-\ref{sto} are {\em not} $\SU_{n+1}$-connections.

\no{\em 3. Minimal models}

It is well known (e.g.\ section 9.4 of \cite{PrSe86}) 
that the Virasoro algebra acts by intertwining operators on any $\widehat{\sl}_{n+1}\C$-module
of positive energy.   In this way a positive energy representation gives a representation
of the Virasoro algebra.  
Irreducible representations are classified according to their
central charge $c$ and conformal dimension $h$. 

Representations of the Virasoro algebra can be used to construct special examples of conformal field theories called minimal models; in a minimal model, the Hilbert space of the theory is a sum of finitely many irreducible representations, and the representations which occur are highly restricted.  The theory of these 
\ll Virasoro minimal models\rr is described in \cite{DMS97}.

More generally, representations of the $W$-algebra $W_{n+1}$ (see \cite{FKW92},\cite{BoSc93})
can be used to construct  
\ll $W_{n+1}$ minimal models\rrr.  (The  $W$-algebra $W_{2}$ is the Virasoro algebra.)
The theory of $W_{n+1}$ minimal models is described in \cite{BoSc93}, from which we quote the following result:

\begin{theorem}\label{bs} (\cite{BoSc93})
Let $p,\pp\in\N$ be coprime.  Let $\La^{(+)},\La^{(-)}$ be dominant weights of ${\sl}_{n+1}\C$.
Then there exists an irreducible representation of $W_{n+1}$ whose
central charge is
\begin{equation}\label{cformula}
c=n-n(n+1)(n+2) \tfrac{(\pp-p)^2}{p\pp}
\end{equation}
and whose conformal dimension $h$ is given by
\begin{equation}\label{hformula}
c-24h=n  -  12  \,  
\left\vert
\al_+ (\La^{(+)} + \rho) + \al_- (\La^{(-)} + \rho)
\right\vert^2,
\end{equation}
where $\rho$ is as in Theorem \ref{qqpi}, and
$\al_+ = \sqrt{\pp/p}, \al_- = -\sqrt{p/\pp}$.
\end{theorem}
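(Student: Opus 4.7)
The plan is to derive (\ref{cformula}) and (\ref{hformula}) from the free field (Coulomb gas / Feigin--Fuchs) realization of $W_{n+1}$ underlying the treatment of \cite{BoSc93}. One realizes $W_{n+1}$ inside the rank $n$ Heisenberg vertex algebra attached to $\h^\ast$, with free bosons $\varphi$ normalized by $\langle\varphi(z)\varphi(w)\rangle=-\log(z-w)$, and deforms the stress-energy tensor along the Weyl vector by a background charge $\al_0\rho$ with $\al_0=\al_++\al_-$:
\[
T(z)=-\tfrac12{:}\partial\varphi{\cdot}\partial\varphi{:}(z)+\al_0\,\rho{\cdot}\partial^2\varphi(z).
\]
A standard OPE computation shows that $T$ generates a Virasoro subalgebra of central charge $c=n-12\al_0^2|\rho|^2$, and that $W_{n+1}$ itself arises as the commutant of the screening operators $\oint{:}e^{\al_\pm\al_i\cdot\varphi}{:}$, for $i=1,\dots,n$.

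For (\ref{cformula}): from $\rho=\diag(\tfrac{n}{2},\tfrac{n}{2}-1,\dots,-\tfrac{n}{2})$ with the form $B(X,Y)=\tr XY$ (equivalently, via the Freudenthal--de Vries strange formula) one has $|\rho|^2=\tfrac{n(n+1)(n+2)}{12}$, and $\al_0^2=(\sqrt{\pp/p}-\sqrt{p/\pp})^2=\tfrac{(\pp-p)^2}{p\pp}$; substituting these in $c=n-12\al_0^2|\rho|^2$ gives (\ref{cformula}) directly.

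For (\ref{hformula}): the minimal-model primary labelled by $(\La^{(+)},\La^{(-)})$ is represented by the vertex operator $V_\beta(z)={:}e^{\beta\cdot\varphi(z)}{:}$ with $\beta=-\al_+\La^{(+)}-\al_-\La^{(-)}$, whose OPE with $T$ produces conformal dimension $h=\tfrac12\beta\cdot\beta-\al_0\rho\cdot\beta$. Setting $u=-\beta+\al_0\rho=\al_+(\La^{(+)}+\rho)+\al_-(\La^{(-)}+\rho)$ and expanding the square gives
\[
|u|^2=\beta\cdot\beta-2\al_0\rho\cdot\beta+\al_0^2|\rho|^2=2h+\al_0^2|\rho|^2,
\]
so that $c-24h=n-12\al_0^2|\rho|^2-24h=n-12|u|^2$, which is precisely (\ref{hformula}).

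The main obstacle is not these OPE calculations --- which are routine once the background charge is in place --- but the \emph{existence} of an irreducible $W_{n+1}$-module realizing the prescribed $(c,h)$. For generic $p,\pp$ the Fock module attached to $\beta$ is already an irreducible $W_{n+1}$-module, but at a minimal model it carries singular vectors and one must extract the irreducible subquotient via Felder's BRST complex built from the screening charges $\al_\pm\al_i$ (extended to the $W$-algebra setting by Bouwknegt--McCarthy--Pilch). Verifying that the resulting cohomology is nonzero precisely under the coprimality hypothesis on $(p,\pp)$, and that the surviving module carries the stated conformal data, is the genuinely nontrivial input behind the statement.
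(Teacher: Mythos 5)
The paper offers no proof of this statement: it is an attributed quotation from Bouwknegt--Schoutens, and the only ``proof'' given is the notational commentary locating the two formulas as (6.13) and (6.74)--(6.75) of \cite{BoSc93}. So your attempt cannot be matched against an argument in the paper; what it can be matched against is the source, and there your free-field (Feigin--Fuchs/Coulomb gas) sketch is indeed the standard derivation underlying the cited formulas. Your algebra checks out: $\al_0^2=(\sqrt{\pp/p}-\sqrt{p/\pp})^2=(\pp-p)^2/(p\pp)$ combined with $\vert\rho\vert^2=\tfrac1{12}n(n+1)(n+2)$ (the strange formula, which the paper itself invokes later in section 5, and which also follows directly from $\rho=\diag(\tfrac n2,\tfrac n2-1,\dots,-\tfrac n2)$ and $B(X,Y)=\tr XY$) turns $c=n-12\al_0^2\vert\rho\vert^2$ into (\ref{cformula}); and the completion of the square $\vert -\beta+\al_0\rho\vert^2=2h+\al_0^2\vert\rho\vert^2$ yields $c-24h=n-12\vert u\vert^2$ with $u=\al_+(\La^{(+)}+\rho)+\al_-(\La^{(-)}+\rho)$, which is (\ref{hformula}). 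The one substantive point is the one you flag yourself: the OPE computation only produces the numerology of a Fock module, not the existence of an \emph{irreducible} $W_{n+1}$-module with these conformal data, and that existence is the actual assertion of the theorem. Closing it requires either the Felder-type BRST resolution you mention or the quantized Drinfeld--Sokolov construction of \cite{FKW92}. As written, your argument is therefore a correct reduction of the theorem to that standard but nontrivial input rather than a complete proof; for a quoted result this is a defensible position, but you should state explicitly that the existence step is being deferred to the literature rather than leaving it as a closing remark.
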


Some comments on the notation of \cite{BoSc93} are in order before we
proceed further.
The central charge formula is (6.13) in \cite{BoSc93},  and the conformal dimension
formula is (6.74). The scalars $\al_+,\al_-$ are given just after (6.13) and in (6.75);
we have taken the positive square root for $\al_+$.

For the $W_{n+1}$ minimal model of type $(p,\pp)$ the dominant weights
$\La^{(+)},\La^{(-)}$
are restricted as follows:
\[
\La^{(+)}\in P_{p-(n+1)},
\quad
\La^{(-)}\in P_{\pp-(n+1)}.
\]
This is (6.76) in \cite{BoSc93}.  However, dominant weights which lie in the same orbit of
the centre of $SU_{n+1}$ should
be identified.  This is (6.77) in \cite{BoSc93}. We shall make the action explicit in a moment.
By definition, the minimal model of type  $(p,\pp)$ consists of the set of equivalence classes with respect to this
action. 

The special case $p=n+1,\pp=N=n+1+k=
n+1+\sum_{i=0}^n k_i$ was considered by Fredrickson and Neitzke in \cite{FrNeXX}, in
connection with Argyres-Douglas theories of type $(A_{n},A_{k-1}$).
(Our $n+1$ is called $K$ by them, and our $k=\sum_{i=0}^n k_i$ is called $N$ by them.)  
Here we have
\[
\La^{(+)}=0,
\quad
\La^{(-)}\in P_k.
\]
From now on we shall write $\La=\La^{(-)}$, in keeping with our earlier notation for
dominant weights (this should not be confused with the notation $\La= 
\al_+ \La^{(+)}+ \al_- \La^{(-)}$ in (6.73) of \cite{BoSc93}, which we shall
not use).

Formulae (\ref{cformula}) and (\ref{hformula}) become, in this case:
\begin{equation}\label{cformulaFN}
c =
n - \tfrac1N n (n+2) (N - (n+1))^2
\end{equation}
\begin{equation}\label{hformulaFN}
c-24h=n  -  12 \tfrac {n+1}N \,  \vert\La - \tfrac{N-(n+1)}{n+1} \rho \vert^2
\end{equation}

\begin{example}
The values $\La^{(+)}=\La^{(-)}=0$ are always permitted
in the $W_{n+1}$ minimal model, and for these weights we have $h=0$.  
However a feature of the special case $p=n+1$ is that $h\le 0$
for all weights in the model.  This follows from formula (\ref{hformulaMGTO}) below. As a consequence,
in this special case, the model cannot be unitary.
\qed
\end{example}

The weights $\La$ in the $(n+1,n+1+k)$ minimal model
are restricted to lie in $P_k$, but (as mentioned above) 
there is a further reduction given by dividing by the action of
the centre of $SU_{n+1}$, i.e.\ by the cyclic group
$\{I,\iota I,\dots,\iota^n I \}$ where $\iota$ is a primitive $(n+1)$-th root of unity.

\begin{proposition}
The action of $\iota$ on the weight $\La=\sum_{i=1}^n k_i\eps_i$
is given by 
\[
\iota\cdot\La= \sum_{i=1}^n k_{\si\cdot i} \, \eps_i
\]
where $\si$ is the cyclic permutation $\si=(01\cdots n)$. Here we are using the
notation $k_0,\dots,k_n$ as in section \ref{alc}, i.e.\ $N=n+1+\sum_{i=0}^n k_i$
and the subscript $i$ of $k_i$ is interpreted mod $n+1$. 
\end{proposition}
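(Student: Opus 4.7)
The plan is to relate the action of the centre of $\SU_{n+1}$ on $P_k$ to the standard action on level-$k$ integrable representations of $\widehat{\sl}_{n+1}\C$, realised as the group of diagram automorphisms of the affine Dynkin diagram of type $A_n^{(1)}$, and then to translate the resulting cyclic shift back into the notation of section \ref{alc}.

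First I would rewrite each $\La \in P_k$ in terms of its $n+1$ \emph{affine} Dynkin labels.  Setting $k_0 = k - \sum_{i=1}^n k_i$, the set $P_k$ is in bijection with $(n+1)$-tuples $(k_0,k_1,\dots,k_n) \in \Z_{\ge 0}^{n+1}$ satisfying $\sum_{i=0}^n k_i = k$.  Under this bijection $\La$ corresponds to the level-$k$ affine dominant weight $\sum_{i=0}^n k_i \hat\La_i$, where $\hat\La_0,\dots,\hat\La_n$ are the affine fundamental weights indexed by the nodes of the cyclic diagram $A_n^{(1)}$.

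Next I would invoke the standard description (see e.g.\ Ch.\ 6 of \cite{Ka83} or \S 5 of \cite{PrSe86}) of the action of $Z(\SU_{n+1}) \cong \Z/(n+1)\Z$ on these representations: the centre acts as the group of diagram automorphisms of $A_n^{(1)}$, and the generator $\iota$ corresponds to rotation of the cyclic diagram by one node.  On affine Dynkin labels this rotation cycles the tuple $(k_0,k_1,\dots,k_n) \mapsto (k_1,k_2,\dots,k_n,k_0)$, so the new label at node $i$ is $k_{\si(i)}$ with $\si = (01\cdots n)$.  Converting back, the new coefficients of $\eps_1,\dots,\eps_n$ are precisely the new affine labels at nodes $1,\dots,n$, which produces the claimed formula $\iota \cdot \La = \sum_{i=1}^n k_{\si(i)} \eps_i$.

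The only delicate step is fixing the direction of the cyclic shift so that $\iota$ (and not $\iota^{-1}$) produces this formula; this is the part I expect to require the most care.  To pin it down, I would use the lift $e^{2\pi\i\eps_n} = \iota I$ in the maximal torus, which follows from a direct calculation with the explicit formula for $\eps_n$ given in section \ref{alc}.  Left multiplication by $\iota I$ on the special element $t_\La = e^{2\pi\i \zeta_\La}$ then corresponds to translation $\zeta_\La \mapsto \zeta_\La + \eps_n$ in $\h$, and a routine computation using the extended affine Weyl group $W \ltimes P$ to bring $\zeta_\La + \eps_n$ back into the fundamental alcove $\mathring A$ confirms that the resulting point is $\zeta_{\iota\cdot\La}$ for $\iota\cdot\La$ having affine Dynkin labels $(k_1,k_2,\dots,k_n,k_0)$, as required.
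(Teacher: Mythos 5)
Your proposal is correct and follows essentially the same route as the paper: both reduce the claim to the standard action of the centre of $\SU_{n+1}$ on the Weyl alcove by affine diagram automorphisms, applied to the alcove point $\zeta_\La=\tfrac1{n+1}(m+\rho)$, which cyclically permutes the affine Dynkin labels $(k_0,\dots,k_n)$. The only difference is one of sourcing: the paper simply cites the explicit formula of Toledano Laredo (section 4.4 of \cite{TL99}) for this action, whereas you propose to rederive it from the lift $e^{2\pi\i\,\eps_n}=\iota I$ and the extended affine Weyl group, which is a sound (and correctly oriented) way to pin down the same fact.
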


\begin{proof}  The essential ingredient here is the action of the centre on
the alcove $A$, which is explained in detail in \cite{TL99} 
and in section 2.4 of \cite{Me18}.  An
explicit formula for the case $SU_{n+1}$ can be found in 
section 4.4 of \cite{TL99}.  Applying this formula to our alcove
element $\frac1{n+1}(m+\rho)$, we see that the action corresponds to
cyclic permutation of the subscripts of the $m_i$ (without changing $\rho$).   From our formula
(\ref{mandk}), this corresponds to
cyclic permutation of the subscripts of the $k_i$.
\end{proof}

Thus the weights $\La=\sum_{i=1}^n v_i\eps_i$ which occur 
in the $W_{n+1}$ minimal model of type $(p,\pp)=(n+1,N-(n+1))$
are indexed by 
\ll cyclic $(n+1)$-partitions of $N-(n+1)$\rrr.  These may be enumerated as follows.

\begin{proposition}
Assume that $n+1$ and $N$ are coprime. Then there are 
$\frac 1N \binom N{n+1}$ 
equivalence classes of 
$(n+1)$-tuples $(k_0,\dots,k_n) \in \Z_{\ge0}^{n+1}$
such that $\sum_{i=0}^n k_i=N-(n+1)$, where 
the equivalence relation is defined by
$(k_0,\dots,k_n)\sim (l_0,\dots,l_n)$
if $l_i=k_{i-s}$ for some $s\in\N$ (all indices are mod $n+1$).
\end{proposition}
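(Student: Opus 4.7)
The plan is to view the set $T=\{(k_0,\dots,k_n)\in\Z_{\ge0}^{n+1} : \sum k_i = N-(n+1)\}$ as a set on which the cyclic group $C_{n+1}=\langle\si\rangle$ acts by $\si\cdot(k_0,\dots,k_n)=(k_n,k_0,\dots,k_{n-1})$, and to count orbits. The total size $|T|$ is the number of weak compositions of $N-(n+1)$ into $n+1$ parts, which equals $\binom{N-1}{n}$ by the standard stars-and-bars argument.

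The key step is to show that under the coprimality hypothesis $\gcd(n+1,N)=1$, the $C_{n+1}$-action on $T$ is free. Suppose $\si^s$ fixes some $(k_0,\dots,k_n)\in T$ with $1\le s\le n$. Then $(k_0,\dots,k_n)$ is periodic with period $d:=\gcd(s,n+1)$, which is a proper divisor of $n+1$. Splitting the sum into $(n+1)/d$ copies of one period yields
\[
N-(n+1) \;=\; \tfrac{n+1}{d}\sum_{i=0}^{d-1} k_i,
\]
so $(n+1)/d$ divides $N-(n+1)$, hence also divides $N$. But $(n+1)/d$ also divides $n+1$, so it divides $\gcd(N,n+1)=1$, forcing $(n+1)/d=1$, i.e.\ $d=n+1$, contradicting that $d$ is a proper divisor. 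Hence no $\si^s$ with $s\not\equiv 0\pmod{n+1}$ has a fixed point, and every orbit has cardinality exactly $n+1$.

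Therefore the number of equivalence classes equals
\[
\frac{|T|}{n+1} \;=\; \frac{1}{n+1}\binom{N-1}{n}.
\]
A direct binomial manipulation,
\[
\frac{1}{n+1}\binom{N-1}{n}
=\frac{(N-1)!}{(n+1)!\,(N-n-1)!}
=\frac{1}{N}\cdot\frac{N!}{(n+1)!\,(N-n-1)!}
=\frac{1}{N}\binom{N}{n+1},
\]
gives the stated formula.

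The only nontrivial ingredient is the freeness of the action, and that hinges entirely on the coprimality assumption $\gcd(n+1,N)=1$; without it the action would in general have nontrivial stabilizers and one would need the full Burnside sum $\tfrac{1}{n+1}\sum_{d\mid n+1}\phi(d)\,|T^{\si^{(n+1)/d}}|$. I expect the only delicate point to be the bookkeeping in the periodicity argument (writing the total sum correctly as $(n+1)/d$ times the sum over one fundamental period); everything else is routine.
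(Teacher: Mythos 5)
Your proof is correct, but it follows a genuinely different route from the paper's. You count orbits of the cyclic group $C_{n+1}$ acting directly on the set of weak compositions of $N-(n+1)$ into $n+1$ parts: stars-and-bars gives $\binom{N-1}{n}$ compositions, the periodicity/divisibility argument shows the action is free precisely because $\gcd(n+1,N)=1$, and a final binomial identity converts $\frac{1}{n+1}\binom{N-1}{n}$ into the stated $\frac{1}{N}\binom{N}{n+1}$. The paper instead encodes each tuple as a cyclic word of length $N$ in the two symbols $D$ (appearing $n+1$ times) and $z^{-1}$ (appearing $N-(n+1)$ times), arising from the scalar o.d.e.\ $z^{-k_i}Dz^{-k_{i-1}}D\cdots Dz^{-k_{i-n}}D\,y_i=y_i$; it then counts necklaces under the length-$N$ rotation action, whose freeness follows from the observation that a repeated sub-string would force a common divisor $r>1$ of $N$ and $n+1$. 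The two freeness arguments are essentially the same coprimality mechanism in different clothing. Your version is more self-contained and avoids the (true but unexamined) identification of cyclic classes of tuples with cyclic classes of length-$N$ words, at the cost of the extra binomial manipulation; the paper's version produces $\frac{1}{N}\binom{N}{n+1}$ directly and, more importantly for its purposes, ties the combinatorics to the isomorphism classes of the $D$-modules defined by the connection $\nabla=d+\eta(z)dz$, which is the point of the surrounding discussion. Your closing remark about needing the full Burnside sum without coprimality is also accurate.
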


\begin{proof}  A reference from the combinatorics literature is Corollary 1 of \cite{RoKn10}, but we shall give a proof based on the properties of the holomorphic connection $\na=d+\eta(z)dz$, which
seems appropriate for the present context. 

The equation for parallel sections of $\na=d-\eta(z)^tdz$ can be written
\[
D
\bp
y_0 \\ \vdots \\ y_n
\ep
=
\eta(z)^t
\bp
y_0 \\ \vdots \\ y_n
\ep,
\quad
D=\tfrac d{dz}.
\]
Each $y_i$ satisfies a scalar o.d.e.\
\[
z^{-k_i} D z^{-k_{i-1}} D \cdots D z^{-k_{i-n}} D \,y_i = y_i
\]
where we interpret $k_{i-(n+1)}$ as $k_i$.  As $i$ varies, these scalar equations differ by cyclic permutations, and they are all equivalent
in an obvious sense --- more formally, they define isomorphic $D$-modules.  

To calculate the number of equivalence classes, we have to calculate
the number of (ordered) strings of symbols $D$ ($n+1$ times) and
$z^{-1}$ ($N-(n+1)=\sum_{i=0}^n k_i$ times) up to
cyclic equivalence.

Now, there are $\binom N{n+1}$ ways to choose the positions of the $D$'s.
This gives $\frac 1N \binom N{n+1}$ cyclic equivalence classes, 
{\em assuming that} the orbit of every string has $N$ distinct elements.  An
orbit has less than $N$ distinct elements if and only if it contains a sub-string of length $l$, containing $m$ $D$'s say, repeated $r$ times (with $l,r>1$). In that
case we have $lr=N$ and $mr=n+1$, but this is impossible
as $n+1$ and $N$ are coprime.
\end{proof}

Now we shall connect this to the construction of section \ref{alc}. 
Formula (\ref{Mandk}) can be written
\[
N \tfrac1{n+1}m = \La- \tfrac{N-(n+1)}{n+1}\rho
\]
where $\La=\sum_{i=1}^n k_i\eps_i$.  Using this,
(\ref{hformulaFN}) becomes
\begin{equation}\label{hformulaMGTO}
c-24h=n  -  12 \tfrac N{n+1} \,  \vert m \vert^2.
\end{equation}
Substituting for $c$ from (\ref{cformulaFN}), we can write the formula for $h$ as 
\begin{equation}
24h =-\tfrac1N n(n+2)  (N - (n+1))^2 + 12 \tfrac N{n+1} \vert m\vert^2.
\end{equation}
Alternatively, using the fact that
$
\vert\rho\vert^2=\tfrac 1{12} n(n+1)(n+2),
$
we can write
\begin{equation}
h= \tfrac {n+1}{2N} 
\left(
\left\vert \La- \tfrac{N-(n+1)}{n+1} \rho \right\vert^2 - 
\left\vert\tfrac{N-(n+1)}{n+1}  \rho \right\vert^2
\right).
\end{equation}

Fredrickson and Neitzke arrive at these considerations from a rather different starting point,
 namely a certain moduli space of Higgs bundles on $\C$.   
 The moduli space admits an action of $\C^\ast$, and the fixed points of this action are the
 forms (or Higgs fields) $\eta(z)dz$ with all $k_i\in\Z_{\ge 0}$.   In their notation the quantity
 $\frac N{n+1} \vert m\vert^2$ is called $\mu$, and it is identified with
 a \ll regulated norm\rr of  $\eta(z)dz$.   Our formula (\ref{hformulaMGTO}) is then
 \[
 c-24h=n-12\mu.
 \] 
 Thus we recover Theorem 5.3 of \cite{FrNeXX}.  This relation is
 the basis for the \ll somewhat mysterious\rr bijection (\cite{FrNeXX}, section 5)
 between Higgs fields
 and representations of $W_{n+1}$.  
 
Our construction in section \ref{alc} gives a (mathematical) explanation of this bijection.   Namely, it shows how the representation arises directly from  $\eta(z)dz$ by means of its Stokes data. Thus it is the Stokes data which provides the crucial link.

{\em
\noindent
Department of Mathematics, Faculty of Science and Engineering\newline
Waseda University\newline
3-4-1 Okubo, Shinjuku\newline
Tokyo 169-8555\newline
JAPAN
}

{\em
\noindent
College of Engineering\newline
Nihon University\newline
1 Nakagawara, Tamuramachi Tokusada, Koriyama\newline
Fukushima 963-8642\newline
JAPAN
}

\end{document}